\theoremstyle{plain}
\newtheorem{thm}{Theorem}
\theoremstyle{definition}
\theoremstyle{remark}
\newtheorem{remk}{Remark}
\newtheorem{prop}{Proposition}
\newtheorem{dfn}{Definition}
\numberwithin{equation}{section}
\numberwithin{thm}{section}
\numberwithin{lem}{section}
\numberwithin{nsl}{section}
\numberwithin{remk}{section}
\numberwithin{prop}{section}
\numberwithin{dfn}{section}
\begin{document}

\begin{center}
\textbf{D. KOROLIOUK, V.S. KOROLIUK}
\vskip15pt

\textbf{DIFFERENTIAL DIFFUSION MODEL WITH TWO EQUILIBRIUM STATES}
\end{center}
\vskip10pt

\textbf{Keywords:} \textit{discrete Markov diffusion, evolutionary process, classification of equilibriums, stochastic approximation}

\vskip8pt
\section*{\normalsize Summary}
\hskip18pt The difference diffusion model with two equilibrium states is given by a stochastic equation with two components: the predicted one, which is determined by the regression function of increments with two equilibriums, and the stochastic one, which is the martingale difference.

We propose a classification of zones of influence of equilibriums according to asymptotic properties of trajectories of statistical experiments.

We study asymptotic behavior of statistical experiments, determined by the sums of $N$ sample values, as $N\to\infty$.

\vskip18pt

\section*{\normalsize Introduction}
\vskip8pt

In our works \cite{DK-6, DK-8} we research \textit{statistical experiments} (SE), given by the average sums of sample values that take binary values.

\begin{dfn} {Statistical experiments} (SE) are determined by average sums of sample values
$(\delta_n(k)$, $1\leq n\leq N)$, $k\geq0$, jointly independent at a fixed $k\geq0$ and equally distributed at different $n\in \, [1,N]$, taking a finite number of values, for simplicity two values $\pm1$:
\begin{equation}
\label{eq1}
S_N(k):=\frac{1}{N}\sum^N_{r=1}\delta_r(k) \ , \ \ -1\leq S_N(k)\leq1 \ , \ \  k\geq0.
\end{equation}
The corresponding SE frequencies are set by the average sums:
\begin{equation}
\label{eq2}
S_N^{\pm}(k):=\frac{1}{N}\sum_{n=1}^N\delta_n^\pm(k) \ , \ \ \delta_n^\pm(k):=I\{\delta_n(k)=\pm 1\} \ , \ \ k\geq0.
\end{equation}
Here, as always, indicator of a random event $I(A)=1$, if $A$ occurs, or $I(A)=0$, if $A$ does not happen.
The parameter $k\geq0$ means a sequence of stages of observations and is considered a discrete time that parameterizes the dynamics of statistical experiments.
\end{dfn}

\noindent The obvious condition of balance takes place:
\begin{equation}
\label{eq3}
S_N^+(k)+S_N^-(k)\equiv 1 \ , \ \ \forall k\geq0.
\end{equation}
The connection relations
\begin{equation}
\label{eq4}
S_N(k)=S_N^+(k)-S_N^-(k) \ , \ \ S_N^\pm(k)=\frac{1}{2}[1\pm S_N(k)],
\end{equation}
allow to study the evolution of SE, given by one of the processes defined (0.1) or (0.2).

In the sequel, the main attention is paid to the study of the dynamics of SE, determined by positive frequencies $S_N^+(k)$, $k\geq0$. However, it is also useful to study the dynamics of binary SE $S_N(k)$, $k\geq0$, given by the average sums (0.1).

\noindent First of all, the dynamics of CE is determined by \textit{evolutionary processes} (EP) $P_\pm(k)$, $k\geq0$, given by the conditional mathematical expectations:
\begin{equation}
P_\pm(k+1):= E[S_N^\pm(k+1)\,|\,S_N^\pm(k)=P_\pm(k)]  \ , \ \  k\geq0,
\end{equation}
and also
\begin{equation}
C(k+1):= E[S_N(k+1)\,|\,S_N(k)=C(k)]  \ , \ \  k\geq0.
\end{equation}
The relation (0.4) generates a connection between EPs (0.5) and (0.6):
\begin{equation}
C(k)=P_+(k)-P_-(k).
\end{equation}
Given the obvious condition of balance
\begin{equation}
P_+(k)+P_-(k)\equiv1  \ , \ \  \forall k\geq0,
\end{equation}
the frequency evolutionary processes $P_\pm(k)$, $k\geq0$, have a representation (cf. with (0.4)):
\begin{equation}
P_\pm(k)=\frac{1}{2}[1\pm C(k)]  \ , \ \  k\geq0.
\end{equation}
In Section 1, the difference evolution model is given by increments of frequency probabilities
\begin{equation}
\Delta P_\pm(k+1)=P_\pm(k+1)-P_\pm(k)  \ , \ \  k\geq0,
\end{equation}
as well as increments of binary EP
\begin{equation}
\Delta C(k+1)=C(k+1)-C(k)  \ , \ \  k\geq0,
\end{equation}
The dynamics of EP (0.5) and (0.6) is given by the conditional mathematical expectations
\begin{equation}
V^\pm(p_\pm):= E[\Delta S_N^\pm(k+1)\,|\,S_N^\pm(k)=p_\pm]  \ , \ \  0\leq p_\pm \leq 1,
\end{equation}
and
\begin{equation}
V_0(c):=-E[\Delta S_N(k+1)\,|\,S_N(k)=c]  \ , \ \  |c|\leq1.
\end{equation}
Note the obvious connection between conditional mathematical expectations (0.12) and (0.13):
\begin{equation}
V_0(c):=-[V^+(p_+)-V^-(p_-)]  \ , \ \  c=p_+-p_- \ , \ \ p_\pm=\frac{1}{2}[1\pm c].
\end{equation}
\vskip8pt
\begin{remk}
Conditional mathematical expectations (0.12) and (0.13) determine predictable components of CE (0.2) and (0.1).
\end{remk}
\vskip8pt
The main purpose of this work is to study the dynamics of SEs (0.1) and (0.2) and their evolutionary processes (0.5) and (0.6) by the stages $ k \ to \ infty $, taking into account the peculiarities of the representation of conditional mathematical expectations (0.12) and (0.13) . Unlike our previous work (see, e.g., \cite{DK-14}), we study the "zones of influence"\, of two equilibriums generated by the principle of "stimulation and restraint"\,. The universality of this principle is confirmed by real interpretations of the dynamics of economic processes \cite{Mas1, Mas2}, development of population genetics models \cite{KKR} and interpretation of learning processes \cite{KBK}.

\vskip8pt
\section{\normalsize The principle "stimulation and restraint"\,}
\vskip8pt
\noindent The frequency probabilities $P_+(k)$ of sample values $\delta_n^+(k)$, $k\geq1$, are determined by \textit{evolutionary processes} (EP), which dynamics by the stages $k\geq0$) is determined by \textit{regression function of increments} (RFI),which reflects the fundamental principle of interaction of collective behavior of a set of objects - the principle "stimulation and restraint"\, (see \cite{DK-14}).
\vskip8pt
\noindent From a mathematical point of view, the principle "stimulation and restraint"\, is given by the linear function of frequencies of values $p_\pm$ ($0\leq p_\pm\leq 1$, $p_++p_-=1$) of the sampling values $\delta_n^\pm(k)$:
\begin{equation*}
\pi_+p_+-\pi_-p_-
\end{equation*}
with two directing parameters $\pi_\pm$, which, without reducing the generality, also satisfy the properties of frequencies:
\begin{equation*}
0<\pi_\pm<1 \ , \ \ \pi_++\pi_-=1.
\end{equation*}
It follows that the principle "stimulation and restraint"\, is expressed by \textsl{frequency fluctuations} (FF)
\begin{equation*}
\pi_+p_+-\pi_-p_-=p_+-\pi_-.
\end{equation*}

The linear component of RFI can also be represented in the following form:
\begin{equation*}
\pi_-p_+-\pi_+p_-=p_+-\pi_+.
\end{equation*}
The product of two frequency fluctuations
\begin{equation}
(p_+-\pi_+)(p_+-\pi_-)=(p_--\pi_-)(p_--\pi_+)
\end{equation}
generates a nonlinear RFI with two equilibriums $\pi_\pm$. Completion of the construction of frequency RFI taking into account the boundary absorption states 0, 1 determines the increments of frequency probabilities:
\begin{equation}
V^\pm(p_\pm)=\mp V\cdot p_\pm(1-p_\pm)(p_\pm-\pi_\pm)(p_\pm-\pi_\mp) \ , \ \ V>0.
\end{equation}
There takes place a balance condition:
\begin{equation}
\label{eq3}
V^+(p_+)+V^-(p_-)\equiv 0 \ , \ \ p_++p_-=1.
\end{equation}

According to the principle of "stimulation and restraint"\,, linear RFI can also be as follows:
\begin{equation*}
\pi_\pm p_++\pi_\mp p_-=\pm\pi(p_+-\overline{\pi}_\mp) \ , \ \ \overline{\pi}_\mp:=\mp{\pi}_\mp/\pi
 \ , \ \ \pi:=\pi_+-\pi_-.
\end{equation*}
Without reducing the generality, we will consider $\pi:=\pi_+-\pi_->0$. Then the equilibriums
$\overline{\pi}_\mp=\mp{\pi}_\mp/\pi$ are outside the interval $(0,\,1)$:
\begin{equation*}
\overline{\pi}_-:=-\pi_-/\pi<0 \ , \ \ \overline{\pi}_+:=\pi_+/\pi>1.
\end{equation*}
Hence the equilibrium states determined by equilibriums $\overline{\pi}_\mp$, "stimulate"\, or "restrain"\, dynamics of frequency probabilities, without dividing the range of values - the interval $(0,\,1)$ on additional areas of influence.
Such equilibriums are also considered in Maslov's economic studies \cite{Mas1, Mas2}.

\vskip8pt
The presence of two equilibriums in a regression function of increments (RFI), which determine \textit{difference evolutionary equations} (DEE) for increments of frequency probabilities (0.5) and increments of binary evolutionary processes (0.6) significantly changes the dynamics of both evolutionary processes and the statistical experiments themselves.

Next we will consider frequency and binary evolutionary processes, which are characterized by "zones of influence"\, of equilibriums $\pi_\pm$, and determine attractive $\pi_+$ and repulsive $\pi_-$ by equilibriums (Theorem 2.1 in frequency representation and its binary variant - Theorem 2.2).

The interpretation of the zones of influence of two equilibriums is given, taking into account the asymptotic behavior of evolutionary processes by $k\to\infty$.

Further we investigate SEs with RFI, that has two equilibriums, according to the scheme of the works \cite{DK-6, DK-8}.

Initially (Section 4) the dynamics of SE (0.1) and (0.2) is given by the difference stochastic equation (4.3) and (4.4) taking into account the martingales (4.1) and (4.2) and their first two moments (4.5) - (4.7).

Then in Section 5, the classification of equilibriums of stochastic model SE is suggested using the method of stochastic approximation (\cite{Robb-Monr}).

Finally in Section 6, the stochastic component is approximated by a sequence distribution of independent normally distributed random variables. Theorem 6.1 gives the basis for the normal approximation of SE (Proposition 6.1).

In the final Section 7, the normalized SE are considered in discrete-continuous time and are approximated by a continuous Ornstein-Uhlenbeck process (\cite{DKor7, DKor9}).

\vskip8pt
\section{\normalsize Difference evolutionary model}
\vskip8pt
The increments of frequency probabilities (0.10) and binary EPs (0.11) are given by conditional mathematical expectations (0.12) and (0.13).
\begin{dfn}
The frequency probabilities $P_\pm(k)$, $k\geq0$, are given by solutions of \textit{difference evolution equations} (DEE)
\begin{align}
& \Delta P_\pm(k+1)=V^\pm(P_\pm(k)) \ , \ \ k\geq0,
\end{align}
and also
\begin{align}
& \Delta C(k+1)=-V_0(C(k)) \ , \ \ k\geq0,
\end{align}
\end{dfn}
The conditional mathematical expectations (2.1), (2.2) are determined by RFI
\begin{equation}
V^\pm(p_\pm)=\mp V\cdot p_\pm(1-p_\pm)(p_\pm-\pi_\pm)(p_\pm-\pi_\mp)  \ , \ \  0\leq p_\pm \leq 1,
\end{equation}
and also
\begin{equation}\begin{split}
V_0(c)=-V_0\cdot(&1-c^2)(c+\pi)(c-\pi),\\
&\pi=\pi_+-\pi_-  \ , \ \ V_0:=V/4 \ , \ \  |c|\leq1.
\end{split}\end{equation}
RFI (2.4) of binary EPs $C(k)$, $k\geq0$, also has two or two equilibriums $\mp\pi$.

Further analysis of the EP, given by DEE (2.1) and (2.2) taking into account RFI (2.3) and (2.4), is implemented according to the scheme of our works \cite{DK-6, DK-8}.

\begin{figure}[h]
\noindent\centering{
\includegraphics[width=170mm]{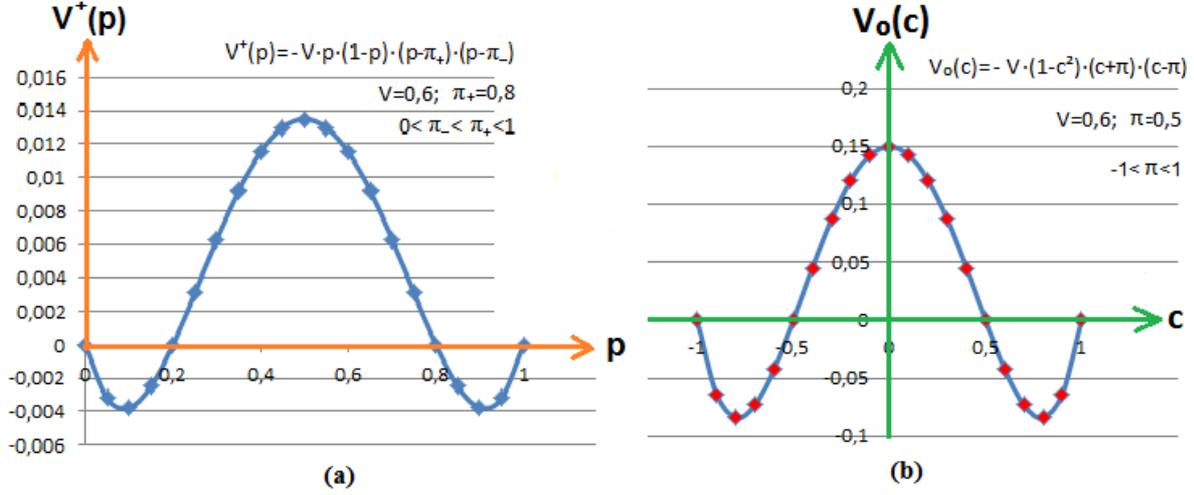}
}
\caption{Regression functions with two equilibriums: (a) frequency; (b) binary.}
\label{fig.1}
\end{figure}

The RFIs (2.3) and (2.4) allocate the zones set by equilibriums $\pi_\pm$ and  $\mp\pi$ (see Fig.1).

The equilibriums $\pi_\pm$ remarks three zones:
\begin{equation}
0<p_+<\pi_-  \ , \ \ \pi_-< p_+<\pi_+  \ , \ \  \pi_+< p_+< 1.
\end{equation}
Similarly, there are three zones that define binary RFI $V_0(c)$:
\begin{equation}
-1< c<-\pi  \ , \ \ -\pi< c<\pi  \ , \ \  \pi< c< 1.
\end{equation}
Now, using the classification of EP models, and taking into account the limit behavior of frequency probabilities $P_\pm(k)$, and binary EP $C(k)$ by  $k\to\infty$, we formulate the classification theorem.

\begin{thm} \label{thm2.1}
The frequency probabilities of alternatives $P_\pm(k)$, $k\geq0$, given by the solutions DSE (2.1), identify areas of influence (2.5) by the following asymptotic behavior:

$\spadesuit$ In the MA model (attractive):
\begin{equation}
\lim_{k\to\infty}P_\pm(k)=\pi_\pm  \ , \ \  \pi_-< P_\pm(0)\leq 1.
\end{equation}

$\spadesuit$ In the MR model (repulsive):
\begin{align}
& \lim_{k\to\infty}P_+(k)= 0 \ , \ \ 0< P_+(0)<\pi_-; \\
& \lim_{k\to\infty}P_-(k)= 1  \ , \ \  \pi_+< P_-(0)< 1.
\end{align}
\end{thm}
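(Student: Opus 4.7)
The plan is to analyze the scalar difference equation
\[ P_+(k+1) = P_+(k) - V \cdot P_+(k)(1-P_+(k))(P_+(k)-\pi_+)(P_+(k)-\pi_-) \]
as a one-dimensional discrete dynamical system on $[0,1]$, and to recover the statements for $P_-(k)$ from the balance relation $P_+(k)+P_-(k)\equiv 1$, which converts every assertion about $P_-$ into an equivalent assertion about $P_+$.

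First I would record the four fixed points of the map, namely the zeros $\{0,\pi_-,\pi_+,1\}$ of $V^+$, and perform a sign analysis of $V^+$ on the three open zones (2.5). The factors $(p-\pi_+)$ and $(p-\pi_-)$ are both negative on $(0,\pi_-)$, have opposite signs on $(\pi_-,\pi_+)$, and are both positive on $(\pi_+,1)$. Combined with the overall sign $-V$ and the nonnegative factor $p(1-p)$, this gives $V^+<0$ on $(0,\pi_-)\cup(\pi_+,1)$ and $V^+>0$ on $(\pi_-,\pi_+)$, so $\pi_+$ is the unique candidate limit for trajectories starting in $(\pi_-,1)$ and $0$ is the unique candidate limit for trajectories starting in $(0,\pi_-)$.

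The main technical step, and the principal obstacle, is to verify that each of the three open zones is forward-invariant under the iteration, i.e.\ that the discrete update does not overshoot the next fixed point or leave $[0,1]$. A priori, a large value of $V$ could cause $P_+(k+1)$ to jump past $\pi_+$ and produce oscillation rather than monotone convergence. I would dispense with this by the elementary estimate $|\Delta P_+|\leq V\,p(1-p)\,|p-\pi_+|\,|p-\pi_-|$, together with $p(1-p)\leq 1/4$ and $|p-\pi_\mp|\leq 1$, which under the natural smallness assumption on $V$ implicit in the model (e.g.\ $V\leq 4$) bounds $|\Delta P_+|$ strictly below the distance from $P_+(k)$ to the fixed point it is approaching. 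Forward invariance of $(0,\pi_-)$, $(\pi_-,\pi_+)$ and $(\pi_+,1)$ then follows.

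With invariance and the sign of $V^+$ established, $(P_+(k))_{k\geq 0}$ is monotone and bounded inside the open zone containing $P_+(0)$, hence convergent. Its limit $L$ satisfies $V^+(L)=0$ and lies in the closure of that zone, which forces $L=\pi_+$ when $P_+(0)\in(\pi_-,\pi_+)\cup(\pi_+,1)$ and $L=0$ when $P_+(0)\in(0,\pi_-)$. This proves (2.7) and (2.8). The assertions for $P_-$ follow by applying $P_-(k)=1-P_+(k)$: the regime $P_+(0)\in(0,\pi_-)$ is exactly the regime $P_-(0)\in(\pi_+,1)$ with $\lim_k P_-(k)=1$, yielding (2.9), and the MA statement for $P_-$ is equivalent, under the same change of variable, to the MA statement for $P_+$.
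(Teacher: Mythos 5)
Your proposal follows essentially the same route as the paper's proof: a sign analysis of $V^+$ on the three zones (2.5), monotonicity plus boundedness of $P_+(k)$ to extract a limit, identification of that limit with a zero of $V^+$, and recovery of the $P_-$ statements from the balance relation $P_++P_-\equiv1$. The only place you go beyond the paper is the explicit forward-invariance (no-overshoot) estimate under a smallness condition on $V$; the paper simply asserts monotone convergence within each zone, so your extra step closes a gap the published argument leaves implicit rather than constituting a different method.
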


\vskip8pt
Similarly, binary EPs are determined by the zones of influence (2.6) of equilibriums $\mp\pi$ by asymptotic behavior:
\begin{thm}\label{thm2.2}
Binary EP $C(k)$, $k\geq0$, given by the solutions of DEE (2.2), determine the zones of influence of equilibriums $\mp\pi$:

$\spadesuit$ In the MA model (attractive):
\begin{equation}
\lim_{k\to\infty}C(k)=\pi  \ , \ \  -\pi< C(0)< 1;
\end{equation}

$\spadesuit$ In the MR model (repulsive):
\begin{equation}
\lim_{k\to\infty}C(k)=0 \ , \ \  0< C(0)< -\pi.
\end{equation}
\end{thm}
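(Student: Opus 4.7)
The plan is to obtain Theorem 2.2 from the already-established Theorem 2.1 via the affine bijection $c=2p_+-1$ that links the binary EP $C(k)$ with the frequency EP $P_+(k)$ through (0.4) and (0.7), so that the asymptotics of $C(k)$ are pushed forward from those of $P_+(k)$ rather than re-derived from scratch.

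First I would establish algebraic equivalence of the two DEE. Starting from the balance condition (2.3), $V^+(p_+)+V^-(p_-)=0$ under $p_++p_-=1$, and combining with the connection formula (0.14), one gets $V_0(c)=-2V^+(p_+)$ whenever $c=p_+-p_-$. Consequently, if $P_+(k)$ solves (2.1) then $C(k):=2P_+(k)-1$ satisfies
\[
\Delta C(k+1)=2\Delta P_+(k+1)=2V^+(P_+(k))=-V_0(C(k)),
\]
which is precisely (2.2); the correspondence is a bijection and sends the frequency equilibria $0,\pi_-,\pi_+,1$ of (2.5) to the binary equilibria $-1,-\pi,\pi,1$ of (2.4) and (2.6).

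Second, I would transport the asymptotic conclusions zone by zone. For the attractive case, the hypothesis $\pi_-<P_+(0)\le 1$ of Theorem 2.1 maps to $-\pi<C(0)<1$, and the limit $P_+(k)\to\pi_+$ maps to $C(k)\to 2\pi_+-1=\pi$, which is exactly the MA claim of Theorem 2.2. For the repulsive case, the factorization (2.4) shows that on each sub-interval of (2.6) the increment $\Delta C(k+1)=-V_0(C(k))$ has constant sign, determined by the signs of the factors $1-c^2$, $c-\pi$, $c+\pi$ together with $\pi>0$; hence $C(k)$ is monotone and bounded, and therefore converges to the unique equilibrium of (2.2) singled out by that sign, in agreement with the MR statement of Theorem 2.2 and with the push-forward under $c=2p_+-1$ of the corresponding repulsive asymptotics of Theorem 2.1.

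The principal obstacle is the sign audit of the quartic $(1-c^2)(c-\pi)(c+\pi)$ on each of the three sub-intervals of (2.6): one must verify that $\Delta C(k+1)$ points toward the equilibrium advertised on that zone and that the monotone trajectory cannot overshoot the neighbouring equilibrium, which amounts to checking that the discrete step $|V_0(C(k))|$ stays smaller than the distance to the next root of $V_0$. This is a direct inspection using the boundedness of $V_0$ on $[-1,1]$; once it is carried out, no further Lyapunov-type construction is needed beyond the one-dimensional monotone convergence theorem and the content of Theorem 2.1.
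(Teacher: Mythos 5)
You prove the theorem by a genuinely different route than the paper. The paper contains no separate proof of Theorem 2.2 at all: it declares the proofs of Theorems 2.1 and 2.2 ``similar'' and carries out only the frequency case, via the sign table (2.13) for $V^+$, monotone-plus-bounded convergence of $P_+(k)$, and identification of the limit from the root equation (2.15). You instead reduce the binary case to the frequency case through the affine bijection $c=2p_+-1$: from the balance condition (1.3) (which you miscite as (2.3)) together with (0.14) you correctly get $V_0(c)=-2V^+(p_+)$, hence $C(k)=2P_+(k)-1$ solves (2.2) exactly when $P_+(k)$ solves (2.1), and the limits and basins of Theorem 2.1 push forward: $\pi_+\mapsto\pi$, $\pi_-\mapsto-\pi$, $(\pi_-,1)\mapsto(-\pi,1)$. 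This buys real economy (no second dynamical argument) and it also surfaces an inconsistency the paper leaves silent: your identity is incompatible with the printed formula (2.4), since (2.2) combined with (2.4) gives $\Delta C(k+1)=+\tfrac{V}{4}(1-C^2(k))(C^2(k)-\pi^2)$, a drift \emph{away} from $\pi$ on $(-\pi,\pi)$, contradicting (2.10); the orientation your reduction produces, $\Delta C(k+1)=\tfrac{V}{8}(1-C^2(k))(\pi^2-C^2(k))$, is the one Theorem 2.2 actually requires, so (2.4) carries a sign error (and an inessential constant discrepancy, $V/8$ versus $V_0=V/4$) that your derivation silently corrects. Your supplementary sign audit of the quartic on the three zones of (2.6) is then exactly the paper's method, transplanted to the binary variable.

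Two points need attention. First, the repulsive case: the push-forward of (2.8) yields $\lim_{k\to\infty}C(k)=-1$ on $-1<C(0)<-\pi$, which is surely what (2.11) intends; but it is not what (2.11) says, namely $C(k)\to 0$ on $0<C(0)<-\pi$, an interval that is empty since the paper fixes $\pi>0$. Your claim of ``agreement with the MR statement'' glosses over a typo you should have flagged: your argument proves the corrected statement, not the printed one. Second, the no-overshoot step. You rightly isolate it as the principal obstacle, but your resolution --- that $|V_0(C(k))|$ stays below the distance to the next root ``by direct inspection using the boundedness of $V_0$ on $[-1,1]$'' --- is not a proof: boundedness alone does not prevent the map $c\mapsto c-V_0(c)$ from jumping past $\pi$; for $V$ large the scheme can overshoot and oscillate, monotonicity fails, and a quantitative smallness condition on $V$ is genuinely needed but appears nowhere. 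In fairness, the paper's own proof has exactly the same lacuna --- it passes from the sign table (2.13) directly to monotone convergence without excluding overshoot --- so your attempt is no less rigorous than the published argument and considerably more candid about where the difficulty sits.
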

\vskip8pt
\begin{proof}[Proof of theorems \ref{thm2.1}, \ref{thm2.2} are similar] Therefore, we consider only the frequency positive probabilities of alternatives $P_\pm(k)$ $k\geq0$ with the regression function of increments (2.3) (see also Fig. 1a):
\begin{equation}
V^+(p_+)= -V\cdot p_+(1-p_+)(p_+-\pi_-)(p_+-\pi_+),
\end{equation}
takes the following values:
\begin{equation}\begin{split}
& V^+(P_+(k))<0 \ , \ \ 0< P_+(0)<\pi_- \ \text{or} \
\pi_+< P_+(0)< 1; \\
& V^+(P_+(k))>0 \ , \ \ \pi_-< P_+(0)<\pi_+.
\end{split}
\end{equation}
Monotonicity of increments (2.13) in zones of influence of equilibriums and the boundess of probabilities $P_+(k) \ : \ \ 0\leq P_+(k)\leq1$, allow us to conclude that there exist the limits (2.7) and (2.8) for the frequencies
\begin{equation}
p_+^*=\lim_{k\to\infty}P_+(k).
\end{equation}
The value of the boundaries (2.7) and (2.8) follows from the obvious equation
\begin{equation}
0=V^+(p_+^*)= p_+^*(1-p_+^*)(p_+^*-\pi_-)(p_+^*-\pi_+).
\end{equation}
For the initial value in the area of influence $\pi_-< P_+(0)<\pi_+$, the monotonic growth of increments (2.13) means that $\lim_{k\to\infty}P_+(k)=p_+^*=\pi_+$.

Similarly, the limit behavior of positive frequency probabilities $P_+(k)$, $k\geq0$, is determined in the areas of influence $\pi_+< P_+(0)< 1$ and the condition $0< P_+(0)<\pi_-$, is provided by a monotonous decline in increments.
\end{proof}

Classification of zones of influence of equilibriums of RFI allows to make the preliminary conclusion concerning real interpretation of equilibriums in modern economic space, and also in models of population genetics \cite{KKR}, or in learning models  \cite{KBK}.

\vskip8pt
\section{\normalsize Interpretation of zones of influence of equilibriums $\pi_\pm$ in the economic space}
\vskip8pt
The dynamics of frequency probabilities $P_+(k)$, $k\geq0$, in the area of attractive equilibrium $\pi_+$
\begin{equation}
\lim_{k\to\infty}P_+(k)=\pi_+ \ , \ \ P_+(0)>\pi_+
\end{equation}
means that the presence of "relative"\, capital $CP_+(k)>C\pi_+$ leads to a general reduction of capital to equilibrium state $C\pi_+$, which characterizes the objectivity of the principle of "stimulation and restraint"\,.

However, attracting equilibrium $\pi_+$  in zone \ $\pi_-<P_+(0)<\pi_+$ \ also leads to an increase in capital to equilibrium \ $C\pi_+$, \ which also characterizes the objectivity of the principle of "stimulation and restraint"\,.

Finally, in the zone of repulsive equilibrium, the dynamics of "relative"\, \ capital
\begin{equation}
\lim_{k\to\infty}P_+(k)=0 \ , \ \ 0<P_+(0)<\pi_-
\end{equation}
means that the subject of economic space with "relative"\, \ capital less than \ $C\pi_-$, \ lose capital.

There is a strategic problem: the reduction of the zone of "impoverishment"\, \ subjects of the economic space, and ideally - the elimination of the zone \ $(0,\,\pi_-)$.

\vskip8pt
\section{\normalsize Differential stochastic model}
\vskip8pt

The difference evolutionary equations (2.1), (2.2) together with RFI (2.3) - (2.4),  generate deterministic evolutionary processes, expressed by the expected characteristics of SE.

The dynamics of CE (0.1) or (0.2) is set by \textsl{difference stochastic equation} (DSE) using martingales (by $k\geq0$)
\begin{equation}
\Delta\mu_N^\pm(k+1):=\Delta S_N^\pm(k+1)-V^\pm(S_N^\pm(k)) \ , \ \ k\geq0,
\end{equation}
and also
\begin{equation}
\Delta\mu_N(k+1):=\Delta S_N(k+1)+V_0(S_N(k)) \ , \ \ k\geq0.
\end{equation}

\begin{dfn}
The frequency statistical experiments (0.2) are given by DSE solutions
\begin{equation}
\Delta S_N^\pm(k+1)=V^\pm(S_N^\pm(k))+\Delta\mu_N^\pm(k+1) \ , \ \ k\geq0,
\end{equation}
Binary statistical experiments (0.1) are given by the DSE solution
\begin{equation}
\Delta S_N(k+1)=-V_0(S_N(k))+\Delta\mu_N(k+1) \ , \ \ k\geq0.
\end{equation}
\end{dfn}

The martingales (4.1) and (4.2) are characterized by the first two points:
\begin{align}
& E[\Delta\mu_N^\pm(k+1)]=0 \ , \ \ E[\Delta\mu_N(k+1)]=0 \ , \ \ k\geq0, \\
& E[(\Delta\mu_N^\pm(k+1))^2\,|\,S_N^\pm(k)]=V_+(S_N^+(k))\cdot V_-(S_N^-(k))/N \ , \ \ k\geq0, \\
& E[(\Delta\mu_N(k+1))^2\,|\,S_N(k)]=[1-V^2(S_N(k))]/N \ , \ \ k\geq0.
\end{align}
Here the regression functions
\begin{align}
&V_\pm(p_\pm):=p_\pm\mp V^\pm(p_\pm) \ , \ \ 0\leq p_\pm\leq1, \\
&V(c):=c-V_0(c) \ , \ \ |c|\leq1.
\end{align}
\vskip8pt
The calculation of the second moments of the martingales (4.1) and (4.2) uses the following obvious relations:
\begin{equation}\begin{split}
E[(\delta_n^\pm(k+1))^2\,&|\,S_N^\pm(k)=p_\pm]=E[\delta_n^\pm(k+1)\,|\,S_N^\pm(k)=p_\pm]\\
&=E[S_N^\pm(k+1)\,|\,S_N^\pm(k)=p_\pm]=V_\pm(p_\pm) \ , \ \ k\geq0, \\
\end{split}
\end{equation}
and also
\begin{equation}\begin{split}
&E[(\delta_n(k+1))^2]\equiv 1 \ , \ \ \forall k\geq0, \\
&E[ \delta_n(k+1)\,|\,S_N(k)=c]=E[S_N(k+1)\,|\,S_N(k)=c]=V(c) \ , \ \ |c|\leq 1.
\end{split}
\end{equation}
So we have
\begin{equation}\begin{split}
D[\delta_n(k+1)\,|\,S_N(k)=c]:=&E[(\delta_n(k+1))^2\,|\,S_N(k)=c]-\\
&-[E[\delta_n(k+1)\,|\,S_N(k)=c]]^2=1-V^2(c).
\end{split}
\end{equation}
Similarly
\begin{equation}
D[\delta_n^\pm(k+1)\,|\,S_N^\pm(k)=p_\pm]=V_\pm(p_\pm)-V_\pm^2(p_\pm)=V_+(p_+)V_-(p_-).
\end{equation}
In addition, the following relations are used
\begin{equation}
D[S_N(k+1)\,|\,S_N(k)=c]=\frac{1}{N}D[\delta_n(k+1)\,|\,S_N(k)=c]=[1-V^2(c)]/N,
\end{equation}
which explains the presence of the multiplier $N^{-1}$ in formulas (4.6) and (4.7).

It remains to note that by the definition of martingales (4.1), (4.2) their conditional variances are determined by the equality
\begin{equation}
E[(\Delta\mu_N(k+1))^2\,|\,S_N(k)]=D[S_N(k+1)\,|\,S_N(k)].
\end{equation}
Really from definition (4.2) we have
\begin{equation}\begin{split}
&E[(\Delta\mu_N(k+1))^2\,|\,S_N(k)]=E[[\Delta S_N(k+1)+V_0(S_N(k))]^2\,|\,S_N(k)]\\
&=E[S_N^2(k+1)\,|\,S_N(k)]-[E[S_N(k+1)\,|\,S_N(k)]]^2=D[S_N(k+1)\,|\,S_N(k)].
                \end{split}
\end{equation}
\vskip8pt
\section{\normalsize Classification of equilibriums of the stochastic model SE}
\vskip8pt

The presence of the stochastic component $\Delta\mu_N(k+1)$ DSE (4.4) for binary SE significantly complicates the solution of the problem of SE stability. It is proposed to use the method \emph{stochastic approximation} (SA), developed by Robinson and Monroe \cite{Robb-Monr} to find the root of the regression equation with stochastic errors.

We enter the SA parameters
\begin{equation*}
a_k:=a/k \ , \ \ k\geq1,
\end{equation*}
satisfying the approximation conditions:
\begin{equation}
\sum_{k=1}^{\infty}a_k=+\infty \ , \ \ \sum_{k=1}^{\infty}a^2_k=a\sum_{k=1}^{\infty}\frac{1}{k^2}=a\frac{\pi^2}{6}<\infty \ , \ \ k\geq0.
\end{equation}
The SA procedure for DSE (4.4) is expressed by the scheme
\begin{equation}
\Delta\alpha_N(k+1)=a_k[-V_0(\alpha_N(k))+\sigma(\alpha_N(k))\Delta\mu_N(k+1)]
 \ , \ \ k\geq1.
\end{equation}
The SA normalization parameters satisfy the conditions (5.1).

\begin{remk}
\emph{The discrete Markov diffusion} (DMD) $\alpha_N(k)$, $k\geq0$, given by the solution of DSE (5.2), is characterized by the first two points:
  \begin{equation}
\begin{split}
&E[\Delta\alpha_N(k+1)\,|\,\alpha_N(k)=c]=-a_kV_0(c) \ , \ \ k\geq0,\\
&E\Bigl[[\Delta\alpha_N(k+1)]^2\,\Bigl|\,\alpha_N(k)=c\Bigr]=a_k^2\bigl[\sigma^2(c)+V_0^2(c)/N\bigr].
\end{split}
   \end{equation}
\end{remk}
\vskip8pt

To classify the zones of influence of equilibriums $\pm\pi$ ($|\pi|<1$)  introduce classifiers of equilibriums:
   \begin{equation}
\Lambda_\pm(c):=-V_0(1-c^2)(c\mp\pi) \ , \ \ |c|\leq1.
   \end{equation}
The following property of classifiers (5.4) is used:
   \begin{equation}
\Lambda_+(c)<0\ , \ \ -\pi<c<1 \ \ ; \ \ \  \Lambda_-(c)<0\ , \ \ -1<c<-\pi.
   \end{equation}

The SA procedure uses Theorems 2.7.2 by Nevelson-Hasminski \cite{NE-HA1973}, adapted to the SE models:
\begin{thm} (\cite[Thm. 2.7.2]{NE-HA1973}).
\label{thm3.1}
Suppose there is a non-negative function $\Lambda_0(c)$, $|c|\leq1$, having an equilibrium point $c_0$: $\Lambda_0(c_0)=0$, and satisfies the inequalities around the point $c_0$, by a fixed $h>0$:
   \begin{equation}
\sup_{\frac{1}{h}\leq|c-c_0|\leq h}\Lambda_0(c)(c-c_0)<0.
   \end{equation}
Then the procedure SA $\alpha_N(k)$, $k\geq0$, converges, with probability 1, to the equilibrium point $c_0$:
 \begin{equation}
   \alpha_N(k)\xrightarrow{P1} c_0 \ , \ \ k\to\infty.
   \end{equation}
\end{thm}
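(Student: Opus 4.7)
The plan is to adapt the classical Lyapunov--supermartingale argument behind the Robbins--Monro theorem to the recursion (5.2). First, I would introduce the quadratic Lyapunov candidate $W(c):=(c-c_0)^2$ and, using Remark~1.1 together with the decomposition $\alpha_N(k+1)-c_0=(\alpha_N(k)-c_0)+\Delta\alpha_N(k+1)$, compute the conditional one-step increment
\begin{equation*}
E[W(\alpha_N(k+1))-W(\alpha_N(k))\,|\,\alpha_N(k)=c]=-2a_kV_0(c)(c-c_0)+a_k^2\bigl[\sigma^2(c)+V_0^2(c)/N\bigr].
\end{equation*}
Because $|c|\le 1$, the functions $V_0$ and $\sigma$ are bounded, so the remainder is dominated by $Ca_k^2$ for a constant $C$, and the series $\sum_k a_k^2$ converges by (5.1).

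Second, I would match the hypothesis (5.6) against the linear drift term. The restraining condition $\Lambda_0(c)(c-c_0)<0$ on the annulus $\{1/h\le|c-c_0|\le h\}$ serves as a comparison function for the true drift $-V_0$, delivering a uniform lower bound $-V_0(c)(c-c_0)\le -\varrho(h)<0$ throughout this annulus. The conditional drift of $W(\alpha_N(k))$ is therefore strictly negative there, up to the summable $O(a_k^2)$ correction.

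Third, I would invoke the Robbins--Siegmund almost-supermartingale convergence lemma applied to $W(\alpha_N(k))$, localized by the first exit from a slightly enlarged neighborhood of $c_0$. This yields simultaneously (a) almost-sure convergence of $W(\alpha_N(k))$ to a finite random limit $W_\infty\ge 0$ and (b) almost-sure finiteness of $\sum_k a_k\,(-V_0(\alpha_N(k)))(\alpha_N(k)-c_0)$. Since $\sum a_k=+\infty$ while the series in (b) converges, the summand must tend to zero along almost every trajectory, so $\alpha_N(k)$ accumulates at $c_0$; combined with (a) this forces $W_\infty=0$ and hence (5.7).

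The main obstacle I anticipate is the localization step: hypothesis (5.6) is only assumed on a bounded annulus, so trajectories that wander outside must be controlled separately. This is resolved by exploiting the compactness of the admissible range $|\alpha_N(k)|\le 1$ and the boundedness of $\sigma$, together with the choice $a_k=a/k$, which ensures that the increments $\Delta\alpha_N(k+1)$ shrink to zero and cannot produce systematic escape; gluing the local supermartingale estimates into a global convergence statement is the technical heart of the argument and follows the lines of \cite{NE-HA1973}.
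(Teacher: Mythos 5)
This statement is not proved in the paper at all: it is imported verbatim (as Theorem 2.7.2 of Nevelson--Hasminskii \cite{NE-HA1973}) and used as a black box in the proof of the classification theorem that follows it. So there is no in-paper argument to compare yours against; what you have written is an outline of the classical Lyapunov--supermartingale proof of the Robbins--Monro-type result, which is indeed the route taken in the cited monograph. Your choice of $W(c)=(c-c_0)^2$, the one-step drift computation from the two conditional moments in Remark 5.1, the summability of the $O(a_k^2)$ remainder from (5.1), and the appeal to the Robbins--Siegmund lemma are all the standard ingredients, and the consistency argument (divergence of $\sum a_k$ versus convergence of the weighted drift series forcing $W_\infty=0$) is correct in outline.

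Two points keep this from being a complete proof rather than a plan. First, the theorem as transcribed never states how $\Lambda_0$ is related to the drift $-V_0$ of the recursion (5.2); you silently substitute the bound $\Lambda_0(c)(c-c_0)<0$ for a bound on $-V_0(c)(c-c_0)$. That is the intended reading (in the paper's application one has $V_0(c)=\Lambda_\pm(c)(c\pm\pi)$, so the sign of $\Lambda_\pm$ does control the sign of the relevant drift term), and the hypothesis ``non-negative $\Lambda_0$ with $\Lambda_0(c)(c-c_0)<0$'' is internally inconsistent as printed, so some repair is unavoidable --- but you should state explicitly which repair you are making, since the whole content of condition (5.6) lives in that identification. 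Second, the localization step that you correctly identify as the technical heart --- controlling excursions outside the annulus $\{1/h\le|c-c_0|\le h\}$ and gluing the local supermartingale estimates into global almost-sure convergence --- is exactly the part you defer to \cite{NE-HA1973}; since that is the very theorem being proved, the deferral is circular as written. In this model the compactness $|\alpha_N(k)|\le1$ and the shrinking steps $a_k=a/k$ do make that step workable, but it needs to be carried out, not just anticipated.
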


Now the classification of zones of influence of equilibriums $\pm\pi$ is formulated as follows:
\begin{thm}
\label{thm3.2}
The dynamics of SE $S_N(k)$, $k\geq0$, is characterized by classifiers (5.4):\\
I: In the area of influence of the attractive equilibrium $\pi$:
 \begin{equation}
P\lim_{k\to\infty}\alpha_N(k)=\pi \ , \ \ -\pi<\alpha_N(0)<1.
   \end{equation}
II: I: In the area of impact of the repulsive equilibrium $-\pi$:
 \begin{equation}
P\lim_{k\to\infty}\alpha_N(k)=0 \ , \ \ -1<\alpha_N(0)<-\pi.
   \end{equation}

\end{thm}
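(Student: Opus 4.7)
The plan is to invoke the Nevelson--Hasminski criterion (Theorem~\ref{thm3.1}) twice, once in each zone of influence, taking the classifiers $\Lambda_\pm(c)$ from (5.4) as the non-negative Lyapunov-type functions.

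For Part I I would set $c_0=\pi$ and work with $\Lambda_+(c)=-V_0(1-c^2)(c-\pi)$. The key elementary verification is that
\begin{equation*}
\Lambda_+(c)(c-\pi)=-V_0(1-c^2)(c-\pi)^2
\end{equation*}
is strictly negative on $(-1,1)\setminus\{\pi\}$, which delivers the annular negativity (5.6) uniformly on every region $h^{-1}\le|c-\pi|\le h$ inside the attractive zone $(-\pi,1)$. The deterministic drift of (5.2) factorises as $-V_0(c)=-(c+\pi)\Lambda_+(c)$, and since $c+\pi>0$ throughout that zone, the drift points toward $\pi$ exactly where the classifier predicts. Combined with the step-size conditions (5.1) and the conditional second-moment bound of (5.3) that controls the martingale increment $\sigma(\alpha_N(k))\Delta\mu_N(k+1)$, Theorem~\ref{thm3.1} yields $\alpha_N(k)\xrightarrow{P1}\pi$. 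Part II proceeds symmetrically with $\Lambda_-(c)=-V_0(1-c^2)(c+\pi)$ on the repulsive zone $(-1,-\pi)$; the sign analysis of $\Lambda_-(c)(c-c_0)$ is completely analogous, identifying $c_0$ as the appropriate zero of $\Lambda_-$ inside the closure of the zone.

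The hard part will be confinement of $\alpha_N(k)$ to its initial zone. In the deterministic Theorems~\ref{thm2.1}--\ref{thm2.2}, monotonicity of the increments keeps the trajectory on the correct side of the separating equilibrium; here the martingale noise in (5.2) can in principle push $\alpha_N(k)$ across $\pm\pi$. Ruling this out with probability one relies on the square-summability $\sum a_k^2<\infty$ from (5.1) together with the boundedness of $V_0$ and $\sigma$ on $[-1,1]$: a Borel--Cantelli-type estimate should show that only finitely many stochastic excursions across the separator occur almost surely, so from some random stage onward the trajectory is trapped inside the zone and the Lyapunov analysis above applies verbatim.
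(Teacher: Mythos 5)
Your plan is essentially the paper's own proof: the paper likewise applies the Nevelson--Hasminski criterion (Theorem \ref{thm3.1}) with the classifiers (5.4), the only cosmetic difference being that it packages your sign check $\Lambda_+(c)(c-\pi)=-V_0(1-c^2)(c-\pi)^2<0$ as a computation of the generator of $\alpha_N(k)$ on the quadratic test functions $\varphi_\pm(c)=(c\pm\pi)^2$, which yields $L_\alpha\varphi_\pm(c)=-2a_k\Lambda_\pm(c)(c\mp\pi)^2+a_k^2B_N(c)$ and then cites the classifier properties. The zone-confinement issue you flag as ``the hard part'' is not addressed in the paper's proof at all, so on that point your sketch is, if anything, more candid than the original.
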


\begin{proof}[Proof of the theorem \ref{thm3.2}]
The classifiers (5.4) and the Nevelson-Hasminski theorem are used \ref{thm3.1}.

Calculating the value of generator DMD $\alpha_N(k)$, $k\geq0$, on test functions $\varphi_\pm(c)=(c\pm\pi)^2$ gives the following
\begin{equation}\begin{split}
L_\alpha\varphi_\pm(c):&=E[\varphi_\pm(c+\Delta\alpha_N(k+1))-\varphi_\pm(c)\,|\,\alpha_N(k)=c]\\
&=2E[\Delta\alpha_N(k+1)\,|\,\alpha_N(k)=c](c\pm\pi)+E[(\Delta\alpha_N(k+1))^2\,|\,\alpha_N(k)=c],
\end{split}\end{equation}
that is, we have
\begin{equation}
L_\alpha\varphi_\pm(c)=-2a_k\Lambda_\pm(c)(c\mp\pi)^2+a_k^2B_N(c).
\end{equation}
Taking into account the properties of classifiers (5.8) - (5.9), the Nevelson-Hasminski theorem substantiates the statement of the theorem \ref{thm3.2}.
\end{proof}

\vskip8pt
\section{\normalsize Approximation of the stochastic component}
\vskip8pt

The martingale property (by $k\geq0$) of stochastic components (4.1) and (4.2)
allows to characterize them in more detail. Namely, the normalized martingales (4.1) and (4.2) are determined by Bernoulli distributions of possible frequency values $\nu_N^\pm$ at a given distribution $p_\pm$:\\
$\clubsuit$ in frequency representation
\begin{equation}
\begin{split}
&P\{\,\Delta\mu_N^\pm(p_\pm)=\nu_N^\pm/N\,|\,S_N^\pm(k)=p_\pm\,\} \\
   &=\frac{N!}{\nu_N^+!\nu_N^-!}(p_+)^{\nu_N^+}(p_-)^{\nu_N^-} \ , \ \ \nu_N^++\nu_N^-= N \ , \ \ 0\leq\nu_N^\pm\leq N.
\end{split}
\end{equation}
$\clubsuit$  in binary representation
\begin{equation}
\begin{split}
&P\{\,\Delta\mu_N(c)=\nu_N/N\,|\,S_N(k)=c\,\} \\
&=\frac{N!}{\nu_N^+!\nu_N^-!}(p_+)^{\nu_N^+}(p_-)^{\nu_N^-} \ , \ \ \nu_N=\nu_N^+-\nu_N^- \ , \ \ -N\leq\nu_N\leq N.
\end{split}
\end{equation}
The martingale property of stochastic components (4.1) and (4.2) as well as their Bernoulli distributions (6.1) and (6.2) provide a "conditional form"\, of approximation using evolutionary processes (0.12) and (0.13).
\vskip8pt

\begin{thm}
\label{thm4.1}
The normalized stochastic components (6.1) and (6.2) are approximated, in distribution, at $N\to\infty$, by a sequence of independent, normally distributed random variables
$W_{k+1}^\pm$, $W_{k+1}$, $k\geq0$:
\begin{align}
&\sqrt{N}\Delta\mu_N^\pm (p_\pm) \xrightarrow{D} W_{k+1}^\pm \ , \ \ E[W_{k+1}^\pm ]^2=\breve{\sigma}^2(p_+,p_-);\\
&\sqrt{N}\Delta\mu_N (c)  \xrightarrow{D} W_{k+1} \ , \ \ E[W_{k+1}]^2=\sigma^2(c),
\end{align}
which are characterized by the second moments
\begin{equation}
\breve{\sigma}^2(p_+,p_-)=V_+(p_+)\cdot V_-(p_-) \ , \ \ \sigma^2(c)=1-V^2(c),
\end{equation}
and are determined by regression functions
\begin{equation}
V_\pm(p_\pm):=p_\pm\mp V^\pm(p_\pm) \ , \ \ V(c):=c-V_0(c).
\end{equation}
\end{thm}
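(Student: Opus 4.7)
My plan is to recognize that each normalized martingale difference is, conditionally on the past, a centered and rescaled sum of i.i.d.\ Bernoulli variables, and then to invoke the classical de Moivre--Laplace central limit theorem. Fix $k\geq 0$ and condition on $S_N^\pm(k)=p_\pm$. By the distributional identity displayed in (6.1), or equivalently the computation in (4.9), the indicators $\delta_n^\pm(k+1)$, $1\leq n\leq N$, are conditionally i.i.d.\ Bernoulli with parameter $V_\pm(p_\pm)$, and the balance $V_+(p_+)+V_-(p_-)=1$ holds. The rewrite
\[
\sqrt{N}\,\Delta\mu_N^+(k+1) = \frac{1}{\sqrt{N}}\sum_{n=1}^{N}\bigl[\delta_n^+(k+1) - V_+(p_+)\bigr]
\]
then identifies the left-hand side as the standardization of a binomial sum of variance $V_+(p_+)V_-(p_-)$, and the classical CLT produces the limiting law $\mathcal{N}\bigl(0,V_+(p_+)V_-(p_-)\bigr)$, which is exactly (6.3) with $\breve{\sigma}^2(p_+,p_-)=V_+(p_+)V_-(p_-)$.

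For the binary statement (6.4) I would repeat the same argument with the $\pm 1$-valued variables $\delta_n(k+1)=\delta_n^+(k+1)-\delta_n^-(k+1)$: conditionally on $S_N(k)=c$ these are i.i.d.\ with mean $V(c)$ (cf.\ (4.10)) and conditional variance $1-V^2(c)$ (cf.\ (4.11)), so the same CLT argument yields the limit $\mathcal{N}(0,1-V^2(c))$ and the formula $\sigma^2(c)=1-V^2(c)$. To upgrade the single-stage convergence to the advertised \emph{sequence} of independent limits $(W_{k+1}^\pm)_{k\geq 0}$ and $(W_{k+1})_{k\geq 0}$, I would argue inductively on $k$: given $\mathcal{F}_k=\sigma\{S_N^\pm(j):j\leq k\}$, the $(k+1)$-st sampling batch $\{\delta_n^\pm(k+1)\}_{n=1}^N$ is drawn independently of the past, so the conditional characteristic functions of several consecutive increments factorize. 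Applying the one-stage CLT in each factor and letting $N\to\infty$ then yields joint convergence to independent Gaussians with the stated variances (6.5).

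The main obstacle, modest but technical, is precisely this asymptotic independence across stages, because the limiting variance at step $k+1$ is a function of the random trajectory value $S_N^\pm(k)$, not a deterministic constant. I would handle it either by using the continuity of $V_\pm$ on the compact interval $[0,1]$ together with the uniform bound $|\delta_n^\pm|\leq 1$ to propagate the characteristic-function factorization to the limit, or by invoking a martingale stable CLT of Brown--Rebolledo type for the triangular array; the Lindeberg condition in the latter approach is immediate since each summand is bounded by $1/\sqrt{N}$. Once this bookkeeping is in place, the theorem provides exactly the ingredient needed for the normal approximation of the statistical experiments referenced immediately afterwards as Proposition 6.1.
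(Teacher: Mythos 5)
Your proposal follows essentially the same route as the paper's proof: both rewrite $\sqrt{N}\Delta\mu_N^\pm(p_\pm)$ (resp.\ $\sqrt{N}\Delta\mu_N(c)$) as a normalized sum of centered, conditionally i.i.d.\ bounded summands $\delta_n^\pm(k+1)-V_\pm(p_\pm)$ (resp.\ $\delta_n(k+1)-V(c)$), compute the same conditional variances $V_+(p_+)V_-(p_-)$ and $1-V^2(c)$ from the two-point distributions, and conclude by the classical central limit theorem for i.i.d.\ summands. The only difference is that you additionally attempt to justify the independence of the limiting sequence across stages $k$ (via characteristic-function factorization or a martingale stable CLT), a point the paper leaves implicit by working entirely in ``conditional form'' at a fixed stage with $p_\pm$ and $c$ treated as given.
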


\begin{proof}[The proof of the theorem \ref{thm4.1}] The Bernoulli distributions (6.1) and (6.2) of normalized matringals (4.1) and (4.2) in "conditional form"\, are generated by normalized sums of sample values
\begin{align}
&\Delta\mu_N^\pm(p_\pm)=\frac{1}{\sqrt{N}}\sum_{n=1}^{N}\beta_n^\pm(p_\pm)
\ , \ \ \beta_n^\pm(p_\pm)=\delta_n^\pm(k+1)-V_\pm(p_\pm)\\
&\Delta\mu_N(c)=\frac{1}{\sqrt{N}}\sum_{n=1}^{N}\beta_n(c)
\ , \ \ \beta_n(c)=\delta_n(k+1)-V(c).
\end{align}
Taking into account the definition of frequency martignals (4.1) and frequency regression functions (6.6), the frequency random variables in the sum (6.7) have the representation:
\begin{equation}
\beta_n^\pm(p_\pm)=
\begin{cases}
1-V_\pm(p_\pm), & \mbox{with probability } P_\pm=V_\pm(p_\pm), \\
\hskip12pt -V_\pm(p_\pm), & \mbox{with probability } P_\mp=V_\mp(p_\mp).
\end{cases}
\end{equation}
Similarly, taking into account the definitions of binary martignals (4.2) and binary regression functions (6.6), binary random variables in the sum (6.8) have the representation:
\begin{equation}
\beta_n(c)=
\begin{cases}
\ \ \ 1-V(c), & \mbox{with probability } P_+(c)=\frac{1}{2}[1+V(c)], \\
-(1+V(c)), & \mbox{with probability } P_-(c)=\frac{1}{2}[1-V(c)].
\end{cases}
\end{equation}
\vskip8pt
The sample values (6.9) and (6.10) are characterized by the first two points:
\begin{align}
& E\beta_n^\pm=E\beta_n=0, \\
& E(\beta_n^\pm)^2=\breve{\sigma}^2(p_+,p_-)=V_+(p_+)\, V_-(p_-) \ , \ \ E(\beta_n)^2=\sigma^2(c)=1-V^2(c).
\end{align}
Therefore, the normalized martingales (6.3) - (6.4) are also characterized by the first two points:
\begin{align}
& E\Delta\mu_N^\pm(p_\pm)=E\Delta\mu_N(c)=0, \\
& E(\Delta\mu_N^\pm(p_\pm))^2=\breve{\sigma}^2(p_+,p_-)=V_+(p_+)\, V_-(p_-),\\
& E(\Delta\mu_N(c))^2=\sigma^2(c)=1-V^2(c).
\end{align}
The use of central limit theorem for sums of independent, equally distributed random variables \cite[\S 3]{Shi-1} completes the proof \ref{thm4.1}.
\end{proof}
\vskip8pt
\begin{prop}
SE with normal approximation of the stochastic component is given by the following DSE (\cite{DK-18}, \cite{DK-19}):
\begin{align}
& \Delta\zeta_N^\pm(k+1)=V^\pm(\zeta_N^\pm(k))
                        +\frac{1}{\sqrt{N}}\breve{\sigma}(\zeta_N^\pm(k))W^\pm(k+1),  \\
& \Delta\zeta_N(k+1)=-V_0(\zeta_N(k))
                        +\frac{1}{\sqrt{N}}\sigma(\zeta_N(k))W(k+1)
\end{align}
Here $W^\pm(k+1)$, $W(k+1)$, $k\geq0$ are sequences of normally distributed standard random variables; the parameters $\breve{\sigma}^2$, ${\sigma}^2$ are determined in (6.14) - (6.15).
\end{prop}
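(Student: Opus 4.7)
The plan is to treat the proposition as a direct consequence of Theorem \ref{thm4.1}: substitute the normal approximation of the martingale differences into the DSEs (4.3) and (4.4) and verify that the resulting recursions coincide with (6.16) and (6.17).

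First, I would recall the starting point. By Definition (4.3)--(4.4), the statistical experiments satisfy
\begin{equation*}
\Delta S_N^\pm(k+1)=V^\pm(S_N^\pm(k))+\Delta\mu_N^\pm(k+1), \quad
\Delta S_N(k+1)=-V_0(S_N(k))+\Delta\mu_N(k+1),
\end{equation*}
where the predictable drift terms $V^\pm$ and $-V_0$ are kept unchanged, and the entire stochastic content is concentrated in the martingale differences $\Delta\mu_N^\pm(k+1)$ and $\Delta\mu_N(k+1)$. The proposition is really a statement about replacing these martingale differences by their normal approximants.

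Next, I would apply Theorem \ref{thm4.1}: conditionally on $S_N^\pm(k)=p_\pm$ (resp. $S_N(k)=c$) the normalized martingales $\sqrt{N}\Delta\mu_N^\pm(p_\pm)$ and $\sqrt{N}\Delta\mu_N(c)$ converge in distribution, as $N\to\infty$, to centered Gaussian variables $W_{k+1}^\pm$ and $W_{k+1}$ with variances $\breve\sigma^2(p_+,p_-)=V_+(p_+)V_-(p_-)$ and $\sigma^2(c)=1-V^2(c)$, respectively. Writing the standard normal factors as $W^\pm(k+1):=W_{k+1}^\pm/\breve\sigma(\,\cdot\,)$ and $W(k+1):=W_{k+1}/\sigma(\,\cdot\,)$, we obtain the approximations
\begin{equation*}
\Delta\mu_N^\pm(k+1)\approx \frac{1}{\sqrt{N}}\breve\sigma(S_N^\pm(k))\,W^\pm(k+1), \quad
\Delta\mu_N(k+1)\approx \frac{1}{\sqrt{N}}\sigma(S_N(k))\,W(k+1).
\end{equation*}
Substituting these into the DSE (4.3)--(4.4) and calling the resulting processes $\zeta_N^\pm(k)$ and $\zeta_N(k)$ gives exactly (6.16) and (6.17). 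Independence of the $W^\pm(k+1)$ and $W(k+1)$ across $k$ follows because the sampled values $\delta_n^\pm(k+1)$, $\delta_n(k+1)$ are generated independently at each stage $k$, so the martingale increments at different $k$ are already independent.

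The only genuinely nontrivial issue is justifying that the one-step normal approximation can be iterated across stages $k$ to define a bona fide process $\zeta_N^\pm(k)$, $\zeta_N(k)$ on a common probability space. The main obstacle here is checking that the conditional conditioning structure is compatible with the unconditional CLT in Theorem \ref{thm4.1}: at each stage the variance depends on the current value $S_N^\pm(k)$ or $S_N(k)$, so one must argue that replacing $\Delta\mu_N$ by its Gaussian surrogate is valid with the state-dependent variance $\breve\sigma^2(\zeta_N^\pm(k))$ (resp.\ $\sigma^2(\zeta_N(k))$). This is handled by invoking Theorem \ref{thm4.1} conditionally on the sigma-algebra generated by the past, and then using boundedness of $\breve\sigma$ and $\sigma$ on $[0,1]$ and $[-1,1]$ to extend the convergence stage-by-stage. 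Once this is in place, (6.16)--(6.17) are simply the normalized Gaussian recurrences defining the approximating DSE, and the identification of $\breve\sigma^2$, $\sigma^2$ with (6.14)--(6.15) completes the proof.
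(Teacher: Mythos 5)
Your proposal is correct and follows essentially the same route as the paper: the paper offers no separate proof of this Proposition, presenting it as an immediate consequence of Theorem 6.1, and your argument --- rescaling the Gaussian limits $W_{k+1}^\pm$, $W_{k+1}$ by $\breve{\sigma}$, $\sigma$ to get standard normals and substituting the resulting surrogates for the martingale differences in the DSE (4.3)--(4.4) --- is exactly the intended justification. Your additional remarks on stage-wise independence and on iterating the one-step approximation with state-dependent variance address points the paper leaves implicit, but they do not change the approach.
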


\vskip8pt
\section{\normalsize Approximation of SE in discrete - continuous time}
\vskip8pt

The approximation of the normalized stochastic component, which is established in Theorem 6.1, gives rise to a new problem of SE approximation in discrete - continuous time.

\begin{dfn} DSE in discrete - continuous time $k=[Nt]$,  $t\geq0$, is given by two normalized components:
\begin{equation}
\Delta\zeta_N(t+1/N)=-\frac{1}{N}V_0(\zeta_N(t))
                        +\frac{1}{\sqrt{N}}\sigma(\zeta_N(t))\Delta\mu_N(t+1/N) \ , \ \ t\geq0.
\end{equation}
The increments of SE in discrete - continuous time are determined by the equality:
\begin{equation*}
\Delta\zeta_N(t+1/N):=\zeta_N(t+1/N)-\zeta_N(t) \ , \ \ t\geq0.
\end{equation*}
\end{dfn}
The normalized stochastic martingale component is given by the conditional Bernoulli distribution (cf. (2.12)).
\begin{equation}
\begin{split}
&P\Bigl\{\frac{1}{\sqrt{N}}\Delta\mu_N(k+1)=\nu_N/N)\,\Bigr|\,S_N(k)=C(k)\Bigr\}\\
   & =\frac{N!}{\nu_N^+!\nu_N^-!}(p_+)^{\nu_N^+}(p_-)^{\nu_N^-} \ , \ \ \nu_N:=\nu_N^+-\nu_N^-.
\end{split}
\end{equation}
Here, by definition (see item 2)
\begin{equation}
P_\pm(k):=\frac{1}{2}[1\pm C(k)] \ , \ \ k=[Nt] \ , \ \ t>0.
\end{equation}
So the first two moments have the following values
\begin{equation}\begin{split}
& E\Delta\mu_N(t+1/N)\equiv0 \ , \ \ \forall t\geq0,\\
& E[(\Delta\mu_N(t+1/N))^2\,|\,\zeta_N(t)=C(k)]=1-V^2(C(k)) \ , \ \ k=[Nt].
\end{split}\end{equation}
So we obtain the approximation theorem
\vskip8pt

\begin{thm}\label{thm6.1}
The normalized SEs given by the DSE solution (7.1) are approximated by the Ornstein-Uhlenbeck process of the type given by the solution of the differential stochastic equation
\begin{equation}
d\zeta_N(t)=-V_0(\zeta(t))dt
                        +\sigma(\zeta(t))dW(t) \ , \ \ t\geq0,
\end{equation}
characterized by a generator
\begin{equation}
L_\zeta\varphi(c) =-V_0(c)\varphi'(c)+\frac{1}{2}\sigma^2(c)\varphi''(c)
                                                   \ , \ \ |c|\leq1.
\end{equation}
\end{thm}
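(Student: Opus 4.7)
The plan is to prove convergence in distribution of the discrete-continuous time process $\zeta_N(t)$ to the diffusion in (7.5) via the classical infinitesimal generator method. The state space $[-1,1]$ is compact, so tightness and compact containment for the sequence $\{\zeta_N\}$ are automatic, and the argument reduces to verifying convergence of generators on a suitable core of test functions together with well-posedness of the limit SDE.

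First, for $\varphi\in C^3([-1,1])$, I would define the discrete generator of the Markov chain $\zeta_N(k/N)$ with time-step $1/N$,
\begin{equation*}
L_N\varphi(c):=N\cdot E\bigl[\varphi(\zeta_N(t+1/N))-\varphi(c)\,\bigl|\,\zeta_N(t)=c\bigr],
\end{equation*}
and Taylor expand $\varphi(c+\Delta\zeta_N)-\varphi(c)=\varphi'(c)\Delta\zeta_N+\tfrac{1}{2}\varphi''(c)(\Delta\zeta_N)^2+R_N$, where the remainder satisfies $|R_N|\leq \tfrac{1}{6}\|\varphi'''\|_\infty|\Delta\zeta_N|^3$. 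Taking conditional expectations and using the representation (7.1) together with the martingale property $E[\Delta\mu_N(t+1/N)\,|\,\zeta_N(t)]=0$ and the second-moment identity (7.4), I obtain
\begin{align*}
E[\Delta\zeta_N\,|\,\zeta_N(t)=c] &= -\tfrac{1}{N}V_0(c),\\
E[(\Delta\zeta_N)^2\,|\,\zeta_N(t)=c] &= \tfrac{1}{N}\sigma^2(c)\bigl(1-V^2(c)\bigr) + O(N^{-2}).
\end{align*}
Since $V_0$, $V$, $\sigma$ are polynomials on $[-1,1]$ and $|\Delta\mu_N|$ is uniformly $O(\sqrt{N})$, the third-moment term is $O(N^{-3/2})$. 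Multiplying by $N$ and collecting contributions yields, after absorbing the correction from the definition of $\sigma^2$,
\begin{equation*}
L_N\varphi(c)=-V_0(c)\varphi'(c)+\tfrac{1}{2}\sigma^2(c)\varphi''(c)+O(N^{-1})=L_\zeta\varphi(c)+O(N^{-1}),
\end{equation*}
with the error uniform in $c\in[-1,1]$.

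The final step is to invoke a standard generator-convergence theorem for Markov processes, e.g.\ the Trotter--Kurtz semigroup convergence result or Ethier--Kurtz Theorem~4.8.2, together with compact containment in $[-1,1]$, to conclude $\zeta_N\xrightarrow{D}\zeta$, where $\zeta$ is the unique solution of the martingale problem for $L_\zeta$, equivalently the strong solution of (7.5) driven by the standard Wiener process $W(t)$.

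The main obstacle I anticipate is verifying well-posedness of the limit SDE (7.5) at the boundary points $c=\pm 1$, where $\sigma^2(c)=1-V^2(c)$ may vanish and $\sigma(c)$ need not be Lipschitz. I would handle this by observing that $\sigma^2$ is polynomial with zeros of integer order, invoking Yamada--Watanabe type pathwise uniqueness for one-dimensional SDEs with H\"older-continuous dispersion coefficient, and checking that the drift $-V_0$ pushes trajectories away from the absorbing boundary into the interior so that the identified generator indeed characterizes a unique diffusion. The remainder of the argument is routine once the generator limit above has been established.
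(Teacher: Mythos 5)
Your proposal follows essentially the same route as the paper: both compute the normalized discrete generator $L_\zeta^N\varphi(c)=N\,E\bigl[\varphi(c+\Delta\zeta_N(t))-\varphi(c)\,\bigl|\,\zeta_N(t)=c\bigr]$, expand on $C_B^3$ test functions to get $L_\zeta^N\varphi=L_\zeta\varphi+R_N\varphi$ with $R_N\varphi\to0$, and pass from generator convergence to convergence of the processes (the paper via the martingale characterization $\mu_N(t)$, you via a Trotter--Kurtz/Ethier--Kurtz semigroup argument). Your added care about compact containment and about well-posedness of the degenerate limit SDE at $c=\pm1$ addresses points the paper leaves entirely implicit, so the proposal is, if anything, more complete than the paper's own sketch.
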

\begin{proof}[Proof of the theorem \ref{thm6.1}] The martingale characterization of Markov process is used $\zeta_N(t)$, $t\geq0$, given by the solution of DSE (7.1):
\begin{equation}
\mu_N(t):=\varphi(\zeta_N(t))-\varphi(\zeta_N(0))-\int_{0}^{N[t/N]}
L_\zeta^{(N)}\varphi(c)(\zeta_N(s))ds
                                                   \ , \ \ k=[Nt].
\end{equation}
The normalized generator is set as follows:
\begin{equation}
L_\zeta^N\varphi(c) =NE\Bigl[\varphi(c+\Delta\zeta_N(t))-\varphi(c)\,|\,\zeta_N(t)=c\Bigr].
\end{equation}
It is not difficult to make sure that the right-hand side of equation (7.7) taking into account (7.8) defines the martingale:
\begin{equation}
E\mu_N(t)=0 \ , \ \ E[\mu_N(t+s)\,|\,\mu_N(t)]=\mu_N(t) \ , \ \ s>0 \ , \ \ t\geq0.
\end{equation}

Next, we use an approximation of the generator (7.8) on fairly smooth numerical test functions
$\varphi(c)\in C_B^3(R)$, bounded and three times continuously differentiated with limited derivatives
\begin{equation}
L_\zeta^N\varphi(c) = L_\zeta\varphi(c)+R_N\varphi(c),
\end{equation}
with residual term:
\begin{equation}
R_N\varphi(c) \ \longrightarrow 0 \ , \ \ N\to\infty \ , \ \ \varphi(c)\in C_B^3(R)
\end{equation}
Approximation (7.10) - (7.11) substantiates the convergence of Markov process generators
$\zeta_N(t)$, $t\geq0$, in series scheme with parameter $N\to\infty$, to the generator (7.6) of the Ornstein-Uhlenbeck process type (7.5), which ensures the convergence of finite-dimensional distributions of normalized SE
\begin{equation}
\zeta_N(t) \Rightarrow \zeta(t) \ , \ \ N\to\infty,
\end{equation}
under the additional condition of the convergence of initial states
\begin{equation}
\zeta_N(0) \Rightarrow \zeta(0) \ , \ \ N\to\infty.
\end{equation}
Theorem \ref{thm6.1} is proved.

\end{proof}

\vskip8pt
\section*{\normalsize Conclusions}
\

The statistical experiments determined by linear regression functions of increments, which reflect the basic principle of interaction "stimulation - restraint"\,  are investigated.

It turns out that two linear growth regression functions have equilibriums in the interaction zone. The main attention is paid to statistical experiments with two equilibriums generated by the product of two linear regression functions of increments, with one of the equilibriums being attractive and the other repulsive.

Such a scheme of constructing the regression function of increments generates three zones of influence with two equilibriums.

\vskip8pt

\end{document}